\theoremstyle{plain}
\newtheorem{theorem}{Theorem}
\newtheorem{lemma}[theorem]{Lemma}
\newtheorem{corollary}[theorem]{Corollary}
\theoremstyle{definition}
\newtheorem{remark}[theorem]{Remark}
\newtheorem{example}[theorem]{Example}
\newcommand\bP{{\mathbb P}}
\newcommand\cL{{\mathcal L}}
\newcommand\cO{{\mathcal O}}
\newcommand\fm{\mathfrak{m}}
\newcommand\ant{{\rm ant}}
\newcommand\diag{{\rm diag}}
\newcommand\id{{\rm id}}
\newcommand\red{{\rm red}}
\newcommand\Aut{{\rm Aut}}
\newcommand\End{{\rm End}}
\newcommand\GL{{\rm GL}}
\newcommand\Spec{{\rm Spec}}
\newcommand\Sym{{\rm Sym}}
\title{Epimorphic subgroups of algebraic groups}
\author{Michel Brion}
\date{}
\begin{document}

\maketitle

\begin{abstract}
In this note, we show that the epimorphic subgroups of an algebraic group are 
exactly the pull-backs of the epimorphic subgroups of its affinization. We also 
obtain epimorphicity criteria for subgroups of affine algebraic groups, which 
generalize a result of Bien and Borel. Moreover, we extend the affinization
theorem for algebraic groups to homogeneous spaces.
\end{abstract}

\section{Introduction and statement of the results}
\label{sec:intr}

The algebraic groups considered in this note are the group schemes of 
finite type over a field $k$. They form the objects of a category, 
with morphisms being the homomorphisms of $k$-group schemes. 
One of the most basic questions one may ask about this category 
is to describe \emph{monomorphisms} and \emph{epimorphisms}.
Recall that a morphism $f: G \to H$ is a monomorphism if it satisfies 
the left cancellation property: for any algebraic group $G'$ and for
any morphisms $f_1,f_2: G' \to G$ such that $f \circ f_1 = f \circ f_2$, 
we have $f_1 = f_2$. Likewise, $f$ is an epimorphism if it satisfies the 
right cancellation property.

The answer to this question is very easy and well-known for monomorphisms:
these are exactly the homomorphisms with trivial (scheme-theoretic) 
kernel, or equivalently the closed immersions of algebraic groups. Also, 
$f: G \to H$ is an epimorphism if and only if so is the inclusion of its 
scheme-theoretic image. This reduces the description of epimorphisms to 
that of the \emph{epimorphic subgroups} of an algebraic group $G$, i.e., 
of those algebraic subgroups $H$ such that every morphism $G \to G'$ 
is uniquely determined by its pull-back to $H$. The purpose of this note
is to characterize such subgroups.

Examples of epimorphic subgroups include the parabolic subgroups of 
a smooth connected affine algebraic group $G$, i.e., the algebraic 
subgroups $H \subset G$ such that the homogeneous space $G/H$ is proper, 
or equivalently projective. Indeed, for any morphisms 
$f_1, f_2: G \to G'$ which coincide on $H$, the map 
$G \to G'$, $x \mapsto f_1(x) \, f_2(x^{-1})$ factors through 
a map $\varphi : G/H \to G'$. But every such morphism 
is constant: to see this, we may assume $k$ algebraically closed;
then $G/H$ is covered by rational curves (images of morphisms 
$\bP^1 \to G/H$), while every morphism $\bP^1 \to G'$ is constant.

In the category of smooth connected affine algebraic groups over
an algebraically closed field, the epimorphic subgroups have been studied 
by Bien and Borel in \cite{Bien-Borel-I, Bien-Borel-II}; see also 
\cite[\S 23]{Grosshans} for a more detailed exposition, and 
\cite[\S 4]{Bien-Borel-Kollar} for further developments. In particular,
\cite[Thm.~1]{Bien-Borel-I} presents several epimorphicity criteria 
in that setting. Our first result extends most of these criteria to
affine algebraic groups over an arbitrary field. To state it, 
let us define \emph{affine epimorphic subgroups} of an affine algebraic 
group $G$ as those algebraic subgroups $H \subset G$ that are epimorphic
in the category of affine algebraic groups. (Clearly, epimorphic implies
affine epimorphic. In fact, the converse holds, as we will show
in Corollary \ref{cor:aff}).  

\begin{theorem}\label{thm:BB}
The following conditions are equivalent for an algebraic subgroup $H$ 
of an affine algebraic group $G$:

\begin{enumerate}

\item[{\rm (i)}] $H$ is affine epimorphic in $G$.

\item[{\rm (ii)}] $\cO(G/H) = k$.

\item[{\rm (iii)}] For any finite-dimensional $G$-module $V$, 
we have the equality of fixed point subschemes $V^H = V^G$.

\item[{\rm (iv)}] For any finite-dimensional $G$-module $V$, if 
$V = V_1 \oplus V_2$, where $V_1,V_2$ are $H$-sub\-modules, then
$V_1,V_2$ are $G$-submodules.

\end{enumerate}

\end{theorem}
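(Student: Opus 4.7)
The plan is to establish the cycle (i) $\Rightarrow$ (iii) $\Rightarrow$ (ii) $\Rightarrow$ (i) together with the separate equivalence (iii) $\Leftrightarrow$ (iv). A useful preliminary remark: for a finite-dimensional $G$-module $V$ with coaction $\rho : V \to V \otimes \cO(H)$, the fixed-point subscheme $V^H$ is the kernel of the $k$-linear map $v \mapsto \rho(v) - v \otimes 1$, hence a $k$-linear subspace of $V$, and similarly for $V^G$. Thus (iii) amounts to an equality of linear subspaces, which can be tested on $k$-points.

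For (i) $\Rightarrow$ (iii), given $v \in V^H$, I form the affine algebraic group $G' := \GL(V) \ltimes V$ and the two homomorphisms $f_1, f_2 : G \to G'$ defined by $f_1(g) = (\rho(g), 0)$ and $f_2(g) = (\rho(g), \rho(g)v - v)$. A direct check shows $f_2$ is a homomorphism and $f_1|_H = f_2|_H$; affine epimorphicity then forces $f_1 = f_2$, so $\rho(g) v = v$ for all $g$. For (iii) $\Rightarrow$ (ii), I use that $\cO(G)$ is a directed union $\bigcup V$ of finite-dimensional $G$-sub\-modules under right translation, so $\cO(G/H) = \cO(G)^H = \bigcup V^H = \bigcup V^G = \cO(G)^G = k$, the last equality because a function fixed under right translation by all of $G$ is constant. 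For (ii) $\Rightarrow$ (i), given $f_1, f_2 : G \to G'$ of affine algebraic groups coinciding on $H$, the morphism $\psi(g) := f_1(g) f_2(g)^{-1}$ satisfies $\psi(gh) = \psi(g)$ for $h \in H$ and so descends to $\bar\psi : G/H \to G'$, using that $G/H$ exists as a quasi-projective scheme. Since $G'$ is affine, $\bar\psi^* : \cO(G') \to \cO(G/H) = k$ corresponds to a single $k$-point, forcing $\bar\psi$ constant; evaluating at the identity gives $\bar\psi \equiv e_{G'}$, i.e.\ $f_1 = f_2$.

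For (iii) $\Rightarrow$ (iv), a decomposition $V = V_1 \oplus V_2$ into $H$-sub\-modules yields projections $p_1, p_2 \in \End(V)$ that are $H$-equivariant, i.e.\ $p_i \in \End(V)^H$; by (iii) applied to the conjugation $G$-module $\End(V)$, we get $p_i \in \End(V)^G$, so each $V_i$ is $G$-stable. For (iv) $\Rightarrow$ (iii), given $v \in V^H$, I enlarge to $W := V \oplus k$ with trivial $G$-action on $k$ and note that $W = (V \oplus 0) \oplus k\cdot(v,1)$ is a decomposition into $H$-sub\-modules; (iv) forces the line $k\cdot(v,1)$ to be $G$-stable. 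Since the second coordinate of $g\cdot(v,1) = (\rho(g)v, 1)$ is constantly $1$, the $G$-orbit of $(v,1)$ reduces to the point $(v,1)$, so $\rho(g)v = v$.

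The main subtlety is to keep all arguments at the scheme-theoretic level; this is handled uniformly by the observation that fixed-point schemes of finite-dimensional representations are linear subspaces, and by invoking the existence of $G/H$ as a quasi-projective $k$-scheme for the implication (ii) $\Rightarrow$ (i). Beyond these two points, the implications are formal consequences of introducing the auxiliary affine groups $\GL(V) \ltimes V$ and modules $V \oplus k$.
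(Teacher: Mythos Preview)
Your proof is correct, and it follows a somewhat different cycle from the paper's. The paper proves (i) $\Rightarrow$ (ii) $\Rightarrow$ (iii) $\Rightarrow$ (iv) $\Rightarrow$ (i), whereas you do (i) $\Rightarrow$ (iii) $\Rightarrow$ (ii) $\Rightarrow$ (i) and (iii) $\Leftrightarrow$ (iv) separately. The implication (iii) $\Rightarrow$ (iv) is identical in both. The paper closes the loop via (iv) $\Rightarrow$ (i) by embedding $G'$ into some $\GL(V)$ and using the decomposition $V \oplus V = (V \oplus 0) \oplus \diag(V)$; your (ii) $\Rightarrow$ (i), descending $\psi(g) = f_1(g) f_2(g)^{-1}$ to $G/H$ and using that any morphism from a scheme with $\cO = k$ to an affine scheme is constant, is more direct and avoids invoking an embedding into $\GL(V)$. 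Conversely, the paper's (ii) $\Rightarrow$ (iii) (via orbit maps $g \mapsto f(g\cdot v)$) is slightly cleaner than your (iii) $\Rightarrow$ (ii), which needs local finiteness of $\cO(G)$ under right translation. Your (iv) $\Rightarrow$ (iii) via $W = V \oplus k$ is a pleasant trick the paper does not need, since it never isolates that direction. In short, both routes are equally valid; yours trades the $\GL(V)$-embedding for the categorical-quotient property of $G/H$, which is arguably more elementary.
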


This result is proved in Section \ref{sec:BB} by adapting the argument 
of \cite[Thm.~1]{Bien-Borel-I} (see also \cite[Thm.~13]{Nguyen-Dao}). 
When $G$ is smooth and connected, condition (ii) is equivalent to 
the $k$-vector space $\cO(G/H)$ being finite-dimensional. But this fails
for non-connected groups (just take $H$ to be the trivial subgroup of
a non-trivial finite group $G$) and for non-smooth groups as well
(take $G$, $H$ as above with $G$ infinitesimal).

For the category of finite-dimensional Lie algebras over a field 
of characteristic $0$, the equivalence of conditions (i), (iii) and (iv) 
has been obtained by Bergman in an unpublished manuscript which was the
starting point of \cite{Bien-Borel-I}; see \cite[Cor.~3.2]{Bergman}, and
\cite{Panyushev} for recent developments based on the (related but not
identical) notion of wide subalgebra of a semi-simple Lie algebra.

Our second result yields an epimorphicity criterion in the category 
of all algebraic groups. To formulate it, recall the affinization theorem
(see \cite[\S III.3.8]{Demazure-Gabriel}): every algebraic group $G$ has 
a smallest normal algebraic subgroup $N$ such that the quotient $G/N$ 
is affine. Moreover, $N$ is smooth, connected, and contained in the center 
of the neutral component $G^0$. Also, $N$ is anti-affine (i.e., 
$\cO(N) = k$) and $N$ is the largest algebraic subgroup of $G$ satisfying 
this property; we denote $N$ by $G_{\ant}$. The quotient morphism 
$G \to G/G_{\ant}$ is the affinization morphism, i.e., the canonical map 
\[ \varphi_G : G \longrightarrow \Spec \, \cO(G). \] 
We may now state:

\begin{theorem}\label{thm:main}
The following conditions are equivalent for an algebraic subgroup 
$H$ of an algebraic group $G$:

\begin{enumerate}

\item[{\rm (i)}] $H$ is epimorphic in $G$.

\item[{\rm (ii)}] $H \supset G_{\ant}$ and $\cO(G/H) = k$.

\end{enumerate}

\end{theorem}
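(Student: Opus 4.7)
The plan is to prove both implications by descending to the affine case via the affinization morphism $\pi\colon G \to A := G/G_{\ant}$ and invoking Theorem~\ref{thm:BB}. For $(ii) \Rightarrow (i)$, I would take $f_1, f_2\colon G \to G'$ agreeing on $H$ and analyze the pointwise ``error'' $c(g) := f_1(g) f_2(g)^{-1}$. Every morphism of algebraic groups sends anti-affine subgroups into anti-affine subgroups, so $f_1, f_2$ induce morphisms $\bar f_1, \bar f_2\colon A \to A' := G'/G'_{\ant}$ that agree on $\bar H := H/G_{\ant}$. Since $\cO(A/\bar H) = \cO(G/H) = k$, Theorem~\ref{thm:BB} gives that $\bar H$ is affine epimorphic in $A$, so $\bar f_1 = \bar f_2$, and therefore $c$ takes values in $G'_{\ant}$.

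Next I would use that $\cO(G/H) = k$ forces $G/H$ to be connected, hence $G = G^0 H$. On $G^0$, the centrality of $G'_{\ant}$ in $(G')^0$ yields, by a direct computation, that $c|_{G^0}$ is a group homomorphism into $G'_{\ant}$; being trivial on $G_{\ant} \subset H$, it factors through the affine quotient $A^0 := G^0/G_{\ant}$, and its image, being a quotient of an affine group sitting inside an anti-affine one, lies in the maximal connected affine subgroup $L' \subset G'_{\ant}$, which is commutative. Observing that $\bar H \cap A^0$ is affine epimorphic in $A^0$ (via the isomorphism $A^0/(A^0 \cap \bar H) \cong A/\bar H$ obtained from $A = A^0 \bar H$), Theorem~\ref{thm:BB}(iii) applied to one-dimensional representations pulled back from characters of $L'$ (and to suitable two-dimensional representations in the presence of a unipotent part in positive characteristic) forces the induced map $A^0 \to L'$ to be trivial. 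Combined with $c|_H \equiv e$ and the decomposition $G = G^0 H$, this gives $c \equiv e$, so $f_1 = f_2$.

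For $(i) \Rightarrow (ii)$, the equality $\cO(G/H) = k$ comes from the following: $H$ epimorphic in $G$ implies in particular that $H$ is epimorphic with respect to morphisms into affine targets, and such morphisms factor through $A$, so $\pi(H)$ is affine epimorphic in $A$ and $\cO(A/\pi(H)) = k$ by Theorem~\ref{thm:BB}; since every finite-dimensional representation of the anti-affine group $G_{\ant}$ is trivial (its image would be simultaneously affine and anti-affine, hence trivial), $G_{\ant}$ acts trivially on the locally finite $G$-module $\cO(G/H)$, giving $\cO(G/H) = \cO(G/H)^{G_{\ant}} = \cO(A/\pi(H)) = k$. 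The containment $H \supset G_{\ant}$ is the main obstacle: I plan to argue by contradiction, assuming $K := G_{\ant} \cap H \subsetneq G_{\ant}$ and constructing two distinct morphisms $G \to G'$ that agree on $H$. The difficulty is the absence of a canonical retraction $G \to G_{\ant}$ or a splitting of the central extension $1 \to G_{\ant} \to G^0 \to A^0 \to 1$; the approach I would take is to exploit the nontrivial anti-affine quotient $G_{\ant}/K$ together with the fibration $G/H \to A/\pi(H)$ (whose fibers are copies of $G_{\ant}/K$), producing the required pair of morphisms, perhaps via a target such as $G \times_A G$ or by an extension-theoretic construction. I expect this step to require the most care.
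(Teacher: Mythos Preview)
Your plan for (ii) $\Rightarrow$ (i) is essentially the paper's argument: form the error $c(g)=f_1(g)f_2(g)^{-1}$, pass to the affinization to see that $c$ lands in $G'_{\ant}$, use centrality of $G'_{\ant}$ in $(G')^0$ to make $c|_{G^0}$ a homomorphism, factor through $A^0=G^0/G_{\ant}$, and kill it with the affine epimorphicity of $\bar H\cap A^0$ in $A^0$. One simplification: you do not need the structure of a ``maximal connected affine subgroup $L'\subset G'_{\ant}$'' nor any character/two-dimensional representation argument. The scheme-theoretic image of $A^0\to G'_{\ant}$ is a quotient of the affine group $A^0$, hence affine; so $c|_{G^0}$ factors through a morphism from $A^0$ to an \emph{affine} group which is trivial on the affine epimorphic subgroup $A^0\cap\bar H$, and is therefore trivial by the very definition of affine epimorphic. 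This is exactly how the paper finishes.

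The genuine gap is in (i) $\Rightarrow$ (ii), specifically the inclusion $H\supset G_{\ant}$. Your proposal here is only a wish: neither the target $G\times_A G$ nor an unspecified ``extension-theoretic construction'' produces two distinct homomorphisms \emph{from} $G$ that agree on $H$, and the fibration $G/H\to A/\pi(H)$ with fiber $G_{\ant}/K$ gives information about $G/H$ as a scheme, not about homomorphisms out of $G$. The paper's route is quite different and uses an ingredient you do not mention: the existence of a smallest normal subgroup $N\triangleleft G$ with $G/N$ \emph{proper} (and $N$ affine). One first proves the separate lemma that an epimorphic subgroup of a \emph{proper} algebraic group is the whole group (Lemma~\ref{lem:proper}); the key step there, after Frobenius and connectedness reductions, is a semi-direct product situation $G=G^0\rtimes H$ with $G^0$ an abelian variety, where the ``doubling'' target $G'=(G^0\times G^0)\rtimes H$ and the maps $(x,y)\mapsto(x,e,y)$, $(x,y)\mapsto(x,x,y)$ do the job. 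Applying this lemma to $G/N$ gives $H/(H\cap N)=G/N$, hence $(G/N)_{\ant}\subset H/(H\cap N)$, and then Lemma~\ref{lem:ant} (using that $N$ is affine) lifts this to $G_{\ant}\subset H$. Without the proper-quotient step and the doubling construction, your plan for this direction is incomplete. (Your alternative argument for $\cO(G/H)=k$ via the trivial $G_{\ant}$-action on the locally finite module $\cO(G/H)$ is fine, but note that the paper only needs it \emph{after} establishing $G_{\ant}\subset H$, which makes it immediate.)
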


This result is proved in Section \ref{sec:main}, after gathering auxiliary results 
in Section \ref{sec:aux}. 

Note that the formations of $\cO(G/H)$ and $G_{\ant}$ commute with base 
change by field extensions of $k$. In view of Theorem \ref{thm:main},
this yields the first assertion of the following:

\begin{corollary}\label{cor:aff}
Let $G$ be an algebraic group, and $H$ an algebraic subgroup.

\begin{enumerate}

\item[{\rm (i)}] $H$ is epimorphic in $G$ if and only if the
base change $H_{k'}$ is epimorphic in $G_{k'}$ for some field
extension $k'$ of $k$.

\item[{\rm (ii)}] When $G$ is affine, $H$ is epimorphic in $G$
if and only if it is affine epimorphic.
 
\end{enumerate}

\end{corollary}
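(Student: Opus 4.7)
The plan is to deduce both parts directly from Theorem~\ref{thm:main}, using the stated fact that the formations of $\cO(G/H)$ and $G_{\ant}$ commute with base change by field extensions, together with Theorem~\ref{thm:BB} for part~(ii).

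For part~(i), the forward direction is immediate: if $H$ is epimorphic in $G$, then by Theorem~\ref{thm:main} we have $H \supset G_{\ant}$ and $\cO(G/H) = k$; tensoring with $k'$ gives $H_{k'} \supset (G_{\ant})_{k'} = (G_{k'})_{\ant}$ and $\cO(G_{k'}/H_{k'}) = \cO(G/H) \otimes_k k' = k'$, whence $H_{k'}$ is epimorphic in $G_{k'}$ by another application of Theorem~\ref{thm:main}. For the reverse direction, I would run the same argument backwards: the containment $H_{k'} \supset (G_{\ant})_{k'}$ descends to $H \supset G_{\ant}$ by faithful flatness of $k \to k'$, and the identity $\cO(G/H) \otimes_k k' = k'$ forces $\dim_k \cO(G/H) = 1$, hence $\cO(G/H) = k$. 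Then Theorem~\ref{thm:main} gives epimorphicity over $k$.

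For part~(ii), the key observation is that when $G$ is affine, $G_{\ant}$ is trivial: indeed, $G_{\ant}$ is a closed subgroup of the affine group $G$, hence affine, while by definition it is anti-affine, and the only group that is both affine and anti-affine is the trivial group. Consequently, the condition $H \supset G_{\ant}$ in Theorem~\ref{thm:main} is vacuous, and that theorem becomes: $H$ is epimorphic in $G$ iff $\cO(G/H) = k$. But by Theorem~\ref{thm:BB}, the latter is exactly the characterization of affine epimorphicity. Combining these equivalences gives~(ii).

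There is no real obstacle here; both parts are essentially bookkeeping once Theorems~\ref{thm:BB} and~\ref{thm:main} are in hand. The only point requiring a moment of care is the reverse direction in~(i), where one must remember that $H \supset G_{\ant}$ is a statement about closed subgroup schemes and therefore descends under faithfully flat base change, and that $\cO(G/H)$ is a $k$-algebra containing $k$, so that $\cO(G/H) \otimes_k k' = k'$ does force equality $\cO(G/H) = k$.
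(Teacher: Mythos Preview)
Your proof is correct and follows exactly the approach the paper takes: both parts are deduced from Theorems~\ref{thm:BB} and~\ref{thm:main} together with the base-change compatibility of $\cO(G/H)$ and $G_{\ant}$. The paper simply states this in two sentences without spelling out the details you have supplied (the descent of the containment and the triviality of $G_{\ant}$ for affine $G$), but the underlying argument is identical.
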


The second assertion follows readily from Theorems \ref{thm:BB}
and \ref{thm:main}. As a consequence, \emph{the epimorphic subgroups 
of an algebraic group $G$ are exactly the pull-backs of the epimorphic 
subgroups of its affinization.} 

Theorem \ref{thm:main} and Corollary \ref{cor:aff} reduce the description 
of the epimorphic subgroups of an algebraic group $G$ over $k$,
to the case where $G$ is affine and $k$ is algebraically closed. When
$G$ is smooth, our next result yields further reductions:

\begin{theorem}\label{thm:smooth}
The following conditions are equivalent for an algebraic subgroup $H$ 
of a smooth algebraic group $G$ over an algebraically closed field $k$:

\begin{enumerate}

\item[{\rm (i)}] $H$ is epimorphic in $G$.

\item[{\rm (ii)}] The reduced subgroup $H_{\red}$ is epimorphic in $G$.

\item[{\rm (iii)}] The reduced neutral component $H^0_{\red}$ is epimorphic 
in $G^0$ and the natural map $H_{\red}/H^0_{\red} \to G/G^0$ is surjective.

\end{enumerate}

\end{theorem}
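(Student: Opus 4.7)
The plan is to prove the equivalences via (ii) $\Rightarrow$ (i), (i) $\Rightarrow$ (ii), and (ii) $\Leftrightarrow$ (iii), using Theorem~\ref{thm:main} throughout: a subgroup $K$ of $G$ is epimorphic if and only if $K \supseteq G_{\ant}$ and $\cO(G/K) = k$. The direction (ii) $\Rightarrow$ (i) is immediate, since any two morphisms $G \to G'$ that coincide on $H$ also coincide on $H_{\red} \subseteq H$.

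For (i) $\Rightarrow$ (ii), I would assume $\car k = p > 0$ (there is nothing to prove in characteristic zero). Over the perfect field $k$, $H_{\red}$ is smooth and characteristic in $H$, so $\Gamma := H/H_{\red}$ is a finite infinitesimal group scheme; since $G_{\ant}$ is smooth, $G_{\ant} \subseteq H$ immediately yields $G_{\ant} \subseteq H_{\red}$. The task thus reduces to showing $\cO(G/H_{\red}) = k$. The quotient $X := G/H_{\red}$ is smooth (quotient of smooth by smooth) and connected (the map $X \to G/H$ has infinitesimal, hence connected, fibres, while $\cO(G/H) = k$ forces $G/H$ connected), so $A := \cO(X)$ is an integral domain. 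The right action of $\Gamma$ on $X$ with quotient $G/H$ gives $A^\Gamma = \cO(G/H) = k$. The crux, which I expect to be the main obstacle, is to deduce $A = k$. For any $a \in A$, the $\Gamma$-subcomodule of $A$ it generates is finite-dimensional (since $\cO(\Gamma)$ is finite over $k$), so the $k$-subalgebra $A_0 \subseteq A$ generated by it is finitely generated, $\Gamma$-stable, with $A_0^\Gamma \subseteq A^\Gamma = k$. By the fact that the affine quotient of a finite-type $k$-scheme by a finite group scheme is finite (Demazure--Gabriel, Ch.~III, \S 2), $A_0$ is finite as a $k$-module; as a finite-dimensional integral domain over the algebraically closed field $k$, $A_0 = k$, so $a \in k$. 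Hence $A = k$.

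For (ii) $\Leftrightarrow$ (iii), since $G_{\ant}$ is connected we have $(G^0)_{\ant} = G_{\ant}$ and $G_{\ant} \subseteq H_{\red}$ iff $G_{\ant} \subseteq H^0_{\red}$, so after applying Theorem~\ref{thm:main} to each side the problem reduces to: $\cO(G/H_{\red}) = k$ if and only if $\cO(G^0/H^0_{\red}) = k$ and $H_{\red}/H^0_{\red} \to G/G^0$ is surjective. The connected components of the homogeneous space $G/H_{\red}$ are the cosets of $G^0 H_{\red}/H_{\red}$, so $\cO(G/H_{\red}) = k$ forces $G = G^0 H_{\red}$, which is the stated surjectivity; and given $G = G^0 H_{\red}$ one has $G/H_{\red} = G^0/(G^0 \cap H_{\red})$, with $G^0/H^0_{\red} \to G/H_{\red}$ a connected Galois \'etale cover having finite constant group $(G^0 \cap H_{\red})/H^0_{\red}$. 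Over algebraically closed $k$, the ring $\cO(G^0/H^0_{\red})$ is then a finite field extension of $k$, and so equals $k$. Conversely, the surjectivity gives $G = G^0 H_{\red}$, whence $\cO(G/H_{\red}) = \cO(G^0/(G^0 \cap H_{\red}))$ injects into $\cO(G^0/H^0_{\red}) = k$.
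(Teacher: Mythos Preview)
Your proof is correct and tracks the paper's closely; the overall logic (reduce everything to Theorem~\ref{thm:main}, then control $\cO(G/H_{\red})$ and $\cO(G^0/H^0_{\red})$ via the relevant finite quotients) is the same, and your treatment of (ii)$\Leftrightarrow$(iii) is essentially identical to the paper's (ii)$\Rightarrow$(iii) and (iii)$\Rightarrow$(i). The one step packaged differently is the heart of (i)$\Rightarrow$(ii): you pass to a finitely generated $\Gamma$-stable subalgebra $A_0\subseteq\cO(G/H_{\red})$ and invoke finiteness of the quotient by the finite infinitesimal group scheme $\Gamma=H/H_{\red}$ to get $A_0$ finite over $A_0^{\Gamma}=k$; the paper instead observes that $u:G/H_{\red}\to G/H$ is finite and purely inseparable between smooth varieties, so $k(G/H_{\red})^n\subset u^{\#}k(G/H)$ for some $p$-power $n$, whence by normality $\cO(G/H_{\red})^n\subset u^{\#}\cO(G/H)=k$. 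Both arguments encode the same fact (for infinitesimal $\Gamma$, suitable $p$-power maps land in the invariants), but the paper's version is self-contained and avoids the appeal to the general quotient theorem, while yours isolates a reusable principle. Either way the conclusion is the same.
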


This result is proved in Section \ref{sec:smooth}. 

In view of the above results, the class of homogeneous spaces 
$X = G/H$ such that $\cO(X) = k$ deserves further consideration. 
These anti-affine homogeneous spaces feature in an extension of 
the affinization theorem for algebraic groups (see 
\cite[III.3.8]{Demazure-Gabriel}), which is our final result. 
To state it, recall that a quasi-compact scheme $Z$ is said 
to be quasi-affine if the affinization map
$\varphi_Z : Z \to \Spec \, \cO(Z)$ is an open immersion (see
\cite[II.5.1.2]{EGA} for further characterizations).
\begin{theorem}\label{thm:aff}
Let $G$ be an algebraic group, and $H$ an algebraic subgroup. 

\begin{enumerate}

\item[{\rm (i)}] There exists a smallest algebraic subgroup $L$ 
of $G$ containing $H$ such that $G/L$ is quasi-affine. Moreover,
$\cO(G/L) = \cO(G/H)$ and the affinization map 
\[ \varphi_{G/H} : G/H \longrightarrow \Spec \, \cO(G/H) =: X \] 
is the composition
\[ G/H \stackrel{u}{\longrightarrow} G/L 
\stackrel{\varphi_{G/L}}{\longrightarrow} \Spec \, \cO(G/L) = X, \] 
where $u$ denotes the canonical morphism. 

\item[{\rm (ii)} ]The formation of $L$ commutes
with base change by arbitrary field extensions.

\item[{\rm (iii)}] $L$ is the largest subgroup of $G$ containing
$H$ such that $L/H$ is anti-affine.

\item[{\rm (iv)}]  $L/H$ is geometrically irreducible.

\item[{\rm (v)}] If $G$ is affine, then $L$ is the largest subgroup of
$G$ containing $H$ as an epimorphic subgroup.

\item[{\rm (vi)}] If $G$ and $H$ are smooth, then so is $L$.

\end{enumerate}

\end{theorem}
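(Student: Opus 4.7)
My approach starts from the $G$-action on the affinization $X := \Spec \cO(G/H)$. I would set $x_0 := \varphi_{G/H}(eH)$ and let $L \subset G$ be the scheme-theoretic stabilizer of $x_0$, noting $H \subset L$ immediately. For (i), the plan is to show $\varphi_{G/H}$ is $G$-equivariant with schematically dense image (as $\varphi_{G/H}^*$ is an isomorphism on global sections), and this image equals the $G$-orbit $G \cdot x_0$; being locally closed (by the general theory of algebraic group actions on schemes of finite type) and dense, $G \cdot x_0$ is open in $X$. The orbit map identifies $G/L$ with $G \cdot x_0$, making $G/L$ an open subscheme of the affine $X$, hence quasi-affine. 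The factorization $G/H \to G/L \hookrightarrow X$ and the equality $\cO(G/L) = \cO(G/H)$ then follow by comparing the chain of injections $\cO(X) \hookrightarrow \cO(G/L) \hookrightarrow \cO(G/H)$, whose composition is the canonical isomorphism defining $X$. For minimality, given any $L' \supset H$ with $G/L'$ quasi-affine, I would use the inclusion $\cO(G/L') \hookrightarrow \cO(G/H)$ to produce a $G$-equivariant dominant morphism $X \to X' := \Spec \cO(G/L')$ sending $x_0$ to the image of $eL'$, whose stabilizer in $G$ is $L'$, forcing $L \subset L'$. Part (ii) is then a formality, since $\cO(G/H)$, $X$, and stabilizers of rational points all commute with flat base change.

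The two implications of (iii) are the heart of the argument, and the hard direction is the main obstacle of the whole theorem. For the easy direction, given $L' \supset H$ with $L'/H$ anti-affine, I would observe that the orbit map $L'/H \to X$, $lH \mapsto l \cdot x_0$, is a morphism from an anti-affine to an affine scheme, hence constant with value $x_0$, so $L' \subset L$. For the hard direction, that $L/H$ is itself anti-affine, my plan is a proof by contradiction. Suppose $\cO(L/H) \neq k$ and pick a non-constant $f \in \cO(L/H)$. Since the left $L$-action on $\cO(L/H)$ is locally finite, $f$ lies in some finite-dimensional $L$-submodule $V$, and evaluation then gives a non-constant $L$-equivariant morphism $\phi : L/H \to V^*$. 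Let $L_0$ denote the stabilizer of $\phi(eH)$ in $L$: then $H \subset L_0 \subsetneq L$, and $L/L_0$ identifies with the $L$-orbit of $\phi(eH)$ in the affine scheme $V^*$, hence is quasi-affine. The crucial step is then to lift this to $G/L_0$: form the associated bundle $E := G \times^L V^*$, a vector bundle over $G/L$; since $E$ is affine over the quasi-affine base $G/L$, it is itself quasi-affine. The $G$-orbit of $[e, \phi(eH)] \in E$ is locally closed in $E$ and identifies with $G/L_0$, so $G/L_0$ is quasi-affine. But this contradicts the minimality of $L$ from (i), since $H \subset L_0 \subsetneq L$.

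The remaining parts follow quickly. For (iv), $L/H$ is anti-affine by (iii), hence connected, and (ii) upgrades this to geometric connectedness; over $\bar k$, connectedness of $L_{\bar k}/H_{\bar k}$ forces $L_{\bar k} = (L_{\bar k})^0 H_{\bar k}$, so $L_{\bar k}/H_{\bar k} \cong (L_{\bar k})^0/((L_{\bar k})^0 \cap H_{\bar k})$ is a quotient of the irreducible group scheme $(L_{\bar k})^0$, hence irreducible. Part (v) combines (iii) with Theorem \ref{thm:BB}: for affine $G$, an intermediate $L' \supset H$ has $H$ epimorphic in $L'$ if and only if $\cO(L'/H) = k$, and the largest such $L'$ is $L$. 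For (vi), I would reduce to $k = \bar k$ by base change. Then $L_{\red}$ is a closed subgroup containing the smooth $H$, and the closed immersion $L_{\red}/H \hookrightarrow L/H$ yields a surjection $k = \cO(L/H) \twoheadrightarrow \cO(L_{\red}/H)$, so $L_{\red}/H$ is anti-affine. Since $L/L_{\red}$ is infinitesimal and hence affine, the projection $G/L_{\red} \to G/L$ is an affine morphism onto a quasi-affine scheme, making $G/L_{\red}$ quasi-affine; the minimality of $L$ then forces $L \subset L_{\red}$, so $L = L_{\red}$ is reduced, hence smooth over the algebraically closed base.
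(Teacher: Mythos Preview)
Your argument for (i) has a genuine gap. You claim the orbit $G\cdot x_0$ is locally closed in $X=\Spec\cO(G/H)$ ``by the general theory of algebraic group actions on schemes of finite type,'' but $\cO(G/H)=\cO(G)^H$ need not be a finitely generated $k$-algebra---Nagata's counterexamples to Hilbert's fourteenth problem are exactly of this form---so $X$ can fail to be of finite type and that result does not apply. Nor is it clear, absent finite type, that the monomorphism $G/L\to X$ is an immersion. The paper avoids this by working on the finite-dimensional side: using noetherianity of $G$, one finds finitely many $f_1,\dots,f_n\in\cO(G)^H$ whose common centralizer under right translation is already $L$; these sit in a finite-dimensional $G$-submodule $V\subset n\,\cO(G)$, and then $G/L$ embeds as a locally closed subscheme of the affine space $V$, hence is quasi-affine. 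Since your stabilizer of $x_0$ coincides with the paper's centralizer of $\cO(G)^H$, this fix drops straight into your framework; your minimality argument and the identification $\cO(G/L)=\cO(G/H)$ are fine as written.

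Your proof of the hard direction of (iii), by contrast, is correct and takes a genuinely different route from the paper. The paper shows $\cO(L/H)=k$ directly by faithfully flat descent: the cartesian square with top row $G\times L/H\to G$ and bottom row $G/H\to G/L$, together with $u_*\cO_{G/H}=\cO_{G/L}$ (which holds because $G/L$ is covered by affines $(G/L)_f$ with $f\in\cO(G/L)=\cO(G/H)$), forces $\cO(G\times L/H)=\cO(G)$. Your contradiction argument via the associated bundle $E=G\times^L V^*$ is more constructive: $E$ is affine over the quasi-affine $G/L$, hence quasi-affine and of finite type, so the $G$-orbit through $[e,\phi(eH)]$ really is locally closed and isomorphic to $G/L_0$ with $H\subset L_0\subsetneq L$, contradicting minimality. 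Both arguments ultimately lean on (i), but yours gives a more explicit geometric picture of what goes wrong if $L/H$ carries a nonconstant function.

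One minor point in (vi): the claim that the closed immersion $L_{\red}/H\hookrightarrow L/H$ induces a \emph{surjection} on global sections is unjustified (closed immersions need not do this), but you do not need it---your second sentence, showing $G/L_{\red}\to G/L$ is affine and hence $G/L_{\red}$ is quasi-affine, already yields $L=L_{\red}$ by minimality, exactly as in the paper.
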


This result is proved in Section \ref{sec:aff}. In the setting of smooth
affine algebraic groups over an algebraically closed field, it gives back 
a statement of Bien and Borel (see \cite[Prop.~1]{Bien-Borel-I}), 
proved by Grosshans in \cite[\S 2, \S 23]{Grosshans}; our proof, based
on a descent argument, is somewhat more direct. 

Also, Theorem \ref{thm:aff} gives back most of the affinization theorem
for an arbitrary algebraic group $G$. More specifically, taking
for $H$ the trivial subgroup and using the fact that every quasi-affine
algebraic group is affine (see e.g.~\cite[VIB.11.11]{SGA3}), we obtain 
that $G$ has a smallest algebraic subgroup $L$ such that $G/L$ is affine, 
and $L$ is the largest anti-affine subgroup of $G$; moreover, $L$ is
connected. But the smoothness property of anti-affine algebraic groups 
does not extend to homogeneous spaces, as shown by 
Example \ref{ex:ns} at the end of Section \ref{sec:aff}.

Returning to the description of all the epimorphic subgroups $H$ 
of a smooth algebraic group $G$ over a field $k$, we may assume 
(by Theorem \ref{thm:main}, Corollary \ref{cor:aff} and Theorem
 \ref{thm:smooth}) $G$ to be affine and connected, 
$H$ smooth and connected, and $k$ algebraically closed; this is 
precisely the setting of \cite{Bien-Borel-I, Bien-Borel-II}. 
Even so, the structure of epimorphic subgroups is only partially
understood; a geometric criterion of epimorphicity is obtained in 
\cite{Petukhov} when $G$ is semi-simple and $k$ has characteristic $0$. 

The classification of epimorphic subgroups of non-smooth algebraic 
groups presents further open problems; see Example \ref{ex:ns}
again for a construction of such subgroups $H \subset G$, for which 
the quotient $G/H$ is non-smooth as well. 
Examples with a smooth quotient may be obtained as follows:
over any algebraically closed field $k$ of prime characteristic, 
there exist rational homogeneous projective varieties $X$ such 
that the automorphism group scheme $\Aut_X$ is non-smooth (see
\cite[Prop.~4.3.4]{Brion-Samuel-Uma}). Let then $G$ denote the neutral 
component of $\Aut_X$, and $H$ the stabilizer of a $k$-rational point
$x \in X$. Then $G$ is affine, non-smooth, and $X \cong G/H$; 
as a consequence, $H$ is non-smooth as well. Also, $H$ is epimorphic 
in $G$, by Theorem \ref{thm:main} or a direct argument as for parabolic 
subgroups. Thus, the description of epimorphic subgroups of possibly 
non-smooth algebraic groups entails that of automorphism group schemes 
of rational homogeneous projective varieties, which seems to be 
completely unexplored. 

\medskip

\noindent
{\bf Notation and conventions}. 
We use the books \cite{Demazure-Gabriel} and \cite{SGA3} as general 
references, and the expository text \cite{Brion} for some further results.

Throughout this note, we consider schemes over a fixed field $k$.
By an \emph{algebraic group}, we mean a group scheme $G$ of finite 
type over $k$; we denote by $e = e_G \in G(k)$ the neutral element,
and by $G^0$ the neutral component of $G$. The group law of $G$ will 
be denoted multiplicatively: $(x,y) \mapsto xy$. 

By a \emph{subgroup} of $G$, we mean a $k$-subgroup scheme $H$;
then $H$ is closed in $G$. \emph{Morphisms} of algebraic groups are 
understood to be homomorphisms of $k$-group schemes. 

Given a subgroup $H \subset G$ and a normal subgroup 
$N \triangleleft G$, we denote by $N \rtimes H$ the corresponding
semi-direct product, and by $N \cdot H$ the scheme-theoretic image
of the morphism 
\[ N \rtimes H \longrightarrow G, \quad (x,y) \longmapsto xy. \]
Then $N \cdot H$ is a subgroup of $G$, and the natural map 
$H/N \cap H \to G/N$ is a closed immersion with image $N \cdot H/N$
(see e.g.~\cite[VIIA.5.3.3]{SGA3}).

\section{Proof of Theorem \ref{thm:BB}}
\label{sec:BB}

(i) $\Rightarrow$ (ii): The action of $G$ on itself by right
multiplication yields a $G$-module structure on the algebra $\cO(G)$ 
(see \cite[Ex.~II.2.1.2]{Demazure-Gabriel}). Moreover, for any
subgroup $K \subset G$ acting on $\cO(G)$ by right multiplication, 
the natural map $\cO(G/K) \to \cO(G)^K$ is an isomorphism, as follows 
e.g.~from \cite[Cor.~VIA.3.3.3]{SGA3}. 
In particular, $\cO(G/H) \cong \cO(G)^H$ and $\cO(G)^G = k$. Thus, 
it suffices to show that every $f \in \cO(G)^H$ is fixed by $G$.

Consider the action of $G$ on itself by left multiplication; this
yields another $G$-module structure on $\cO(G)$, and $\cO(G)^H$ is
a $G$-submodule. By \cite[II.2.3.1]{Demazure-Gabriel}, 
we may choose a finite-dimensional $G$-submodule $V \subset \cO(G)^H$ 
that contains $f$. View $V$ as a vector group (the spectrum of the symmetric
algebra of the dual vector space) equipped with a compatible $G$-action, 
and consider the semi-direct product $G' := V \rtimes G$. 
Then $G'$ is an affine algebraic group; moreover, the maps 
\[ f_1: G \longrightarrow G', \quad g \longmapsto (0, g),
\qquad f_2: G \longrightarrow G',  \quad g \longmapsto (g \cdot f - f, g) \]
are two morphisms which coincide on $H$. Thus, $f_1 = f_2$, that is,
$f$ is fixed by $G$.

(ii) $\Rightarrow$ (iii): Recall from \cite[I.3.2]{Jantzen} that $V^H$ 
is the subscheme of $V$ associated with a linear subspace. So it suffices 
to show that every $v \in V^H(k)$ is fixed by $G$. Let $f \in \cO(V)$. 
Then the assignment $g \mapsto f(g \cdot v)$ defines 
$f_v \in \cO(G)^H \cong \cO(G/H) = k$. Thus, we have 
$f_v(g) = f_v(e)$, that is, $f(g \cdot v) = f(v)$ identically on $G$. 
Since this holds for all $f \in \cO(V)$, it follows that
$g \cdot v = v$ identically on $G$.

(iii) $\Rightarrow$ (iv): Let $\pi : V \to V_1$ denote the projection 
with kernel $V_2$. Consider the action of $G$ on $\End(V)$ by conjugation;
then $\End(V)$ is a finite-dimensional $G$-module, and 
$\pi \in \End(V)^H$. Thus, $\pi \in \End(V)^G$. It follows that
$V_1$ (the image of $\pi$) is normalized by $G$. Likewise, 
$V_2$ is normalized by $G$.

(iv) $\Rightarrow$ (i): Let $G'$ be an affine algebraic group, and
\[ f_1,f_2 : G \longrightarrow G' \] 
two morphisms which coincide on $H$. We may view 
$G'$ as a subgroup of $\GL(V)$ for some finite-dimensional 
vector space $V$ (see \cite[II.2.3.3]{Demazure-Gabriel}). 
This yields two linear representations 
\[ \rho_1, \rho_2 : G \longrightarrow \GL(V) \] 
which coincide on $H$. Consider the morphism 
\[ \rho_1 \oplus \rho_2 : G \longrightarrow \GL(V \oplus V). \]
Then we have with an obvious notation:
\[ V \oplus V = (V \oplus 0) \oplus \diag(V), \] 
where $V \oplus 0$ is normalized by $G$, and $\diag(V)$ is normalized
by $H$ (as $\rho_1 \vert_H = \rho_2 \vert_H$). So $\diag(V)$ is 
normalized by $G$, that is, $\rho_1 = \rho_2$. Thus, $f_1 = f_2$.

\section{Some auxiliary results}
\label{sec:aux}

Throughout this section, $G$ denotes an algebraic group, and 
$H \subset G$ a subgroup. We begin with a series of easy observations.

\begin{lemma}\label{lem:quot}
Assume that $H$ is epimorphic in $G$.

\begin{enumerate}

\item[{\rm (i)}] If $K \subset G$ is a subgroup containing $H$,
then $K$ is epimorphic in $G$.  

\item[{\rm (ii)}] If $N \triangleleft G$ is a normal subgroup, then 
$H/N \cap H$ is epimorphic in $G/N$.

\end{enumerate}

\end{lemma}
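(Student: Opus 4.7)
\medskip

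\noindent
\textbf{Proof plan.} Both assertions follow directly from the definition of epimorphism by a short chase; no structure theory is needed.

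For (i), suppose $f_1, f_2 : G \to G'$ are morphisms of algebraic groups with $f_1|_K = f_2|_K$. Since $H \subset K$, a fortiori $f_1|_H = f_2|_H$, so epimorphicity of $H$ in $G$ yields $f_1 = f_2$. This is essentially a tautology.

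For (ii), let $\pi : G \to G/N$ denote the quotient morphism, and let $\bar f_1, \bar f_2 : G/N \to G'$ be two morphisms of algebraic groups that coincide on $H/N \cap H$. Recall from the notational conventions that the natural map $H/N \cap H \to G/N$ is a closed immersion with image $N \cdot H/N$; in particular, $\pi$ restricted to $H$ factors through this closed subgroup. Therefore the compositions $f_i := \bar f_i \circ \pi : G \to G'$ coincide on $H$. By hypothesis $H$ is epimorphic in $G$, so $f_1 = f_2$. The final step is to cancel $\pi$: since $\pi$ is faithfully flat, it is an epimorphism of schemes and hence an epimorphism of algebraic groups, which gives $\bar f_1 = \bar f_2$.

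The only non-tautological ingredient is the fact that the faithfully flat quotient map $G \to G/N$ is itself an epimorphism in the category of algebraic groups, which I expect to cite from the references \cite{SGA3} or \cite{Demazure-Gabriel}. Beyond that, there is no genuine obstacle.
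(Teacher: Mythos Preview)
Your proof is correct and matches the paper's approach. The paper is even terser: for (ii) it first invokes (i) to conclude that $N \cdot H$ is epimorphic in $G$, then says ``as a direct consequence'' $N \cdot H/N \cong H/N \cap H$ is epimorphic in $G/N$---but unpacking that ``direct consequence'' is exactly your compose-with-$\pi$-and-cancel argument, using that the faithfully flat quotient $\pi$ is an epimorphism.
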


\begin{proof}
The assertion (i) is obvious, and implies that $N \cdot H$ is epimorphic
in $G$. As a direct consequence, $N \cdot H/N$ is epimorphic in $G/N$;
this yields the assertion (ii).
\end{proof}

\begin{lemma}\label{lem:conn}
Let $H$ be an epimorphic subgroup of $G$. 

\begin{enumerate}

\item[{\rm (i)}] If $G$ is finite, then $G = H$.

\item[{\rm (ii)}] For an arbitrary $G$, we have $G = G^0 \cdot H$. 

\end{enumerate}

\end{lemma}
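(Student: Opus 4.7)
For part (i), my plan is to reduce to Theorem \ref{thm:BB}. Since $G$ is finite it is affine, and an epimorphic subgroup of an affine group is automatically affine epimorphic (the cancellation property holds against all algebraic groups $G'$, hence in particular against affine ones). The equivalence (i)~$\Leftrightarrow$~(ii) of Theorem \ref{thm:BB} then yields $\cO(G/H) = k$. But $G/H$ is a finite $k$-scheme, so $\dim_k \cO(G/H)$ equals its length (or rank) over $k$; the condition $\cO(G/H) = k$ thus forces this length to be $1$, i.e., $G/H = \Spec k$. Equivalently, the closed immersion $H \hookrightarrow G$ is surjective on underlying finite schemes, so $H = G$.

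For part (ii), I would apply Lemma \ref{lem:quot}(ii) to the normal subgroup $N := G^0$. This gives that $H/(G^0 \cap H)$ is an epimorphic subgroup of the quotient $G/G^0$. Since $G/G^0$ is a finite algebraic group (the component group scheme of an algebraic group of finite type over $k$), part (i) applies and yields $H/(G^0 \cap H) = G/G^0$. Recalling from the notational conventions in Section \ref{sec:intr} that the natural map $H/(G^0 \cap H) \to G/G^0$ is a closed immersion with image $G^0 \cdot H/G^0$, this equality is precisely the statement $G^0 \cdot H = G$.

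I do not expect any real obstacle: the argument is a direct application of Theorem \ref{thm:BB} followed by the quotient reduction of Lemma \ref{lem:quot}(ii). The only points worth a brief sanity check are that epimorphic implies affine epimorphic (immediate from the definitions), that $G/G^0$ is finite (a standard structural fact about algebraic groups of finite type), and the translation of $H/(G^0 \cap H) = G/G^0$ into $G^0 \cdot H = G$ via the closed immersion just recalled.
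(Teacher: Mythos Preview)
Your proof is correct and follows essentially the same route as the paper: for (i) you invoke Theorem~\ref{thm:BB} to get $\cO(G/H)=k$ and conclude from finiteness of $G/H$ that $H=G$, and for (ii) you pass to $G/G^0$ via Lemma~\ref{lem:quot}(ii) and apply (i) to the finite quotient. The paper's wording differs only cosmetically (it notes that $G/H$ has a $k$-rational point and that $G/G^0$ is moreover \'etale, but neither refinement is needed for the argument).
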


\begin{proof}
(i) Since $G$ is affine and $H \subset G$ is affine epimorphic, we have 
$\cO(G/H) = k$ by Theorem \ref{thm:BB}. As the scheme $G/H$ is finite
and contains a $k$-rational point $x$, it follows that this scheme 
consists of the point $x$, hence $H = G$.

(ii) By Lemma \ref{lem:quot} (ii), $G^0 \cdot H/G^0$ is epimorphic 
in $G/G^0$. Thus, we may replace $G$ with $G/G^0$, and hence assume 
that $G$ is finite and \'etale. Then $H = G$ by (i).
\end{proof}

\begin{remark}\label{rem:aff}
(i) For finite \'etale groups, Lemma \ref{lem:conn} (i) also follows by 
adapting the proof of the surjectivity of epimorphisms of abstract groups, 
given in \cite{Linderholm}. 

\smallskip

\noindent
(ii) Lemmas \ref{lem:quot} and \ref{lem:conn} also hold in the category 
of affine algebraic groups, with the same proofs.
\end{remark}

\begin{lemma}\label{lem:ant}
Let $N \triangleleft G$ be a normal subgroup. If $H \supset G_{\ant}$, 
then $H/N \cap H \supset (G/N)_{\ant}$. Conversely, if
$H/N \cap H \supset (G/N)_{\ant}$ and $N$ is affine, then 
$H \supset G_{\ant}$.
\end{lemma}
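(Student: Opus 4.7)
The plan is to handle the two implications separately, relying on the general equality $\pi(G_{\ant}) = (G/N)_{\ant}$, where $\pi: G \to G/N$ denotes the canonical quotient. For the first implication, I would establish this equality directly. The inclusion $\pi(G_{\ant}) \subset (G/N)_{\ant}$ is immediate, since $\pi(G_{\ant})$ is a quotient of the anti-affine group $G_{\ant}$, hence anti-affine, and $(G/N)_{\ant}$ is the largest anti-affine subgroup of $G/N$. For the reverse inclusion, I would rewrite
\[ (G/N)/\pi(G_{\ant}) \;=\; G/(N \cdot G_{\ant}) \;=\; (G/G_{\ant})/(N \cdot G_{\ant}/G_{\ant}), \]
which exhibits this quotient as that of the affine group $G/G_{\ant}$ by a normal subgroup, hence affine; the universal property of $(G/N)_{\ant}$ then forces $\pi(G_{\ant}) \supset (G/N)_{\ant}$. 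With this equality in hand, the hypothesis $H \supset G_{\ant}$ immediately gives $H/N \cap H = \pi(H) \supset \pi(G_{\ant}) = (G/N)_{\ant}$.

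For the converse, assume $N$ is affine and $\pi(H) \supset (G/N)_{\ant} = \pi(G_{\ant})$. I would first observe that $N$ and $G_{\ant}$ commute: their intersection $N \cap G_{\ant}$ is a closed subgroup of both an affine and an anti-affine group, hence trivial, and since $N$ and $G_{\ant}$ are both normal in $G$, the commutator $[N, G_{\ant}]$ lies in $N \cap G_{\ant} = \{e\}$. Thus $N \cdot G_{\ant} \cong N \times G_{\ant}$ as algebraic groups. The hypothesis forces $G_{\ant} \subset N \cdot H$, and I would then analyze the subgroup
\[ K \;:=\; H \cap (N \cdot G_{\ant}) \;\subset\; N \times G_{\ant}, \]
through its two projections $p_1 : K \to N$ and $p_2 : K \to G_{\ant}$. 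Using $\pi(H) \supset \pi(G_{\ant})$, the projection $p_2$ is surjective; meanwhile $p_1$ is a morphism into an affine group, so it vanishes on the anti-affine part $K_{\ant}$, which places $K_{\ant} \subset G_{\ant}$.

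The main obstacle is to upgrade this to $K_{\ant} = G_{\ant}$, which would give $G_{\ant} \subset K \subset H$. The induced map $K/K_{\ant} \to G_{\ant}/K_{\ant}$ is surjective by the surjectivity of $p_2$; its source is affine by the defining property of $K_{\ant}$, and its target is a quotient of the anti-affine group $G_{\ant}$, hence anti-affine. Any algebraic group that is simultaneously affine and anti-affine is trivial (its algebra of functions equals $k$ and is of finite type, so the scheme reduces to a point), forcing $G_{\ant}/K_{\ant} = 0$ and completing the argument. The whole proof hinges on this clean structural dichotomy between affine and anti-affine algebraic groups, together with the fact that the formation of the anti-affine part commutes with arbitrary quotients $\pi$, which powers the first implication.
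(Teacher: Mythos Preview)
Your first implication is correct; you have essentially reproved \cite[Lem.~3.3.6]{Brion} inline, which is exactly what the paper quotes.

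The converse, however, has a genuine gap. You claim that $N \cap G_{\ant}$ is trivial because it is a closed subgroup of both an affine and an anti-affine group. A closed subgroup of an affine group is indeed affine, but a closed subgroup of an anti-affine group is \emph{not} in general anti-affine: anti-affineness passes to \emph{quotients}, not to subgroups. For a concrete counterexample, take $k$ of characteristic~$0$ and let $G$ be a non-split extension of an elliptic curve by $\bG_m$. Then $G$ is anti-affine, so $G = G_{\ant}$; with $N = \bG_m$ (normal and affine) one has $N \cap G_{\ant} = \bG_m \neq \{e\}$. Your product decomposition $N \cdot G_{\ant} \cong N \times G_{\ant}$ therefore fails, and the projections $p_1,p_2$ on which the rest of your argument relies are not available.

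The paper sidesteps this by replacing the internal product $N \cdot H$ with the \emph{external} semi-direct product $N \rtimes H$, which as a scheme is $N \times H$ regardless of whether $N \cap H$ is trivial. Since $N$ is affine, the affinization morphism of $N \rtimes H$ is identified with $\id_N \times \varphi_H$, so $(N \rtimes H)_{\ant} = \{e\} \times H_{\ant}$; pushing forward along the multiplication map $N \rtimes H \twoheadrightarrow N \cdot H$ (and using \cite[Lem.~3.3.6]{Brion} once more) gives $(N \cdot H)_{\ant} = H_{\ant}$. Since your hypothesis already yields $G_{\ant} \subset N \cdot H$, one concludes $G_{\ant} \subset (N \cdot H)_{\ant} = H_{\ant} \subset H$. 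Your final dichotomy argument is in the right spirit, but it needs this external construction to stand on.
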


\begin{proof}
By \cite[Lem.~3.3.6]{Brion}, the natural map 
$G_{\ant}/N \cap G_{\ant} \to (G/N)_{\ant}$ is an isomorphism. 
This yields the first assertion.

Conversely, assume that $H/N \cap H \supset (G/N)_{\ant}$; equivalently,
$(H/N \cap H)_{\ant} = (G/N)_{\ant}$. Using
\cite[Lem.~3.3.6]{Brion} again, it follows that 
$G_{\ant} \subset N \cdot H_{\ant}$. Thus, it suffices to show that 
$(N \cdot H)_{\ant} = H_{\ant}$. Using once more
\cite[Lem.~3.3.6]{Brion}, it suffices in turn to check that 
$(N \rtimes H)_{\ant} = H_{\ant}$. Since $N$ is affine and 
$N \rtimes H \cong N \times H$ as schemes, the affinization morphism 
\[ \varphi_{N \rtimes H} : N \rtimes H \longrightarrow 
\Spec \, \cO(N \rtimes H) \]
is identified with 
\[ \id \times \varphi_H : N \times H \longrightarrow 
N \times \Spec \, \cO(H). \] 
Taking fibers at $e$ yields the desired equality.
\end{proof}

Next, we obtain a result of independent interest, which generalizes 
(and builds on) Lemma \ref{lem:conn} (i):

\begin{lemma}\label{lem:proper}
If $G$ is proper and $H$ is epimorphic in $G$, then $H = G$.
\end{lemma}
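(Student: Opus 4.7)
The approach is to reduce, through normal quotients, to the semi-direct product situation $G = G^0 \rtimes H$, and then exhibit two morphisms of algebraic groups $G \to G$ that agree on $H$ but force $G^0 = \{e\}$.

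By Lemma \ref{lem:conn}(ii), $G = G^0 \cdot H$, so it will suffice to prove $G^0 \subseteq H$. Now $G^0$ is proper and connected, hence commutative by a standard application of the rigidity lemma (see, e.g., \cite{Brion}). Therefore $K := G^0 \cap H$ is centralized by $G^0$, and it is also normalized by $H$ (since $H$ normalizes the characteristic subgroup $G^0$ and contains $K$); as $G = G^0 \cdot H$, it follows that $K$ is normal in $G$.

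By Lemma \ref{lem:quot}(ii), $H/K$ is then epimorphic in $G/K$, which is again proper. Since $(G/K)^0 \cap (H/K) = (G^0 \cap H)/K = \{e\}$, we may replace $G$ by $G/K$ and $H$ by $H/K$, thereby assuming $G^0 \cap H = \{e\}$. Under this reduction $G = G^0 \rtimes H$ with $G^0$ the normal factor, and the composition $p : G \twoheadrightarrow H \hookrightarrow G$ is a morphism of algebraic groups agreeing with $\id_G$ on $H$. Epimorphicity of $H$ then yields $p = \id_G$, whence $G^0 = \{e\}$, and so $G = G^0 \cdot H = H$.

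The main obstacle is the invocation of the commutativity of proper connected algebraic groups (which is needed to make $K$ normal in $G$ and hence to carry out the quotient reduction); once this classical fact is granted, the remainder is a clean two-step reduction followed by a projection comparison inside the semi-direct product.
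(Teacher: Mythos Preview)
Your endgame---comparing the retraction $p:G^0\rtimes H\twoheadrightarrow H\hookrightarrow G$ with $\id_G$---is correct and in fact tidier than the paper's construction of two maps into $(G^0\times G^0)\rtimes H$. The gap is earlier: the claim ``$G^0$ is proper and connected, hence commutative by rigidity'' is false in positive characteristic. The rigidity lemma requires $\cO(X)=k$, not merely that $X$ be proper and connected; a non-reduced connected proper group scheme can be non-commutative. For instance the first Frobenius kernel $(\SL_2)_1$ is infinitesimal (hence finite, proper, and connected) with non-abelian Lie algebra $\mathfrak{sl}_2$, so $A\times(\SL_2)_1$ for any abelian variety $A$ is proper, connected, and non-commutative. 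Without commutativity of $G^0$ you have no argument that $G^0$ centralizes $K=G^0\cap H$, and the normality of $K$ in $G$---on which the entire quotient reduction rests---is unjustified.

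The paper handles precisely this issue by first reducing to the case where $G$ and $H$ are smooth: one quotients by the $n$th Frobenius kernels $G_n$ (and $H_n=G_n\cap H$) for $n\gg0$, invoking Lemma~\ref{lem:ant} with the affine normal subgroup $N=G_n$ to see that it suffices to show $H/H_n\supset(G/G_n)_{\ant}$. Once $G$ is smooth and proper, $G^0$ is genuinely an abelian variety, commutativity holds, and from that point your argument goes through verbatim. Thus your proof is complete in characteristic~$0$; in characteristic~$p$ it needs this smoothness reduction inserted before the commutativity step.
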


\begin{proof}
The largest anti-affine subgroup $G_{\ant}$ is smooth, connected and proper,
that is, an abelian variety. Moreover, the quotient group $G/G_{\ant}$ 
is proper and affine, hence finite. Thus, using Lemma \ref{lem:quot} (ii) 
and Lemma \ref{lem:conn} (i), it suffices to show that $H$ contains 
$G_{\ant}$.

We now reduce to the case where \emph{$G$ and $H$ are smooth}.
For this, we may assume that $k$ has prime characteristic $p$.
Denote by $G_n$ (resp. $H_n$) the kernel of the $n$th relative 
Frobenius morphism of $G$ (resp. $H$). Then $G_n$ and $H_n$ are 
infinitesimal; also, $G/G_n$ and $H/H_n$ are smooth for $n \gg 0$ 
(see \cite[VIIA.8.3]{SGA3}). Using Lemma \ref{lem:quot} (ii) 
again together with Lemma \ref{lem:ant}, we see that it suffices 
to show that $H/H_n = H/G_n \cap H$ contains $(G/G_n)_{\ant}$. 
This yields the desired reduction.

Under this smoothness assumption, $G^0 = G_\ant$ is an abelian variety. 
Also, we have $G = G^0 \cdot H$ by Lemma \ref{lem:conn} (ii). Thus, 
$G^0 \cap H$ is centralized by $G^0$ and normalized by $H$, and hence 
is a normal subgroup of $G$. Using Lemma \ref{lem:quot} (ii) again, 
we may replace $G$, resp. $H$ with $G/G^0 \cap H$, resp. $H/G^0 \cap H$, 
and hence assume in addition that \emph{$G^0 \cap H$ is trivial}.

Under these assumptions, we may identify $G$ with $G^0 \rtimes H$.
Consider the diagonal action of $H$ on $G^0 \times G^0$ and form
the semi-direct product $G' := (G^0 \times G^0) \rtimes H$. Then
the maps
\[ f_1 : G \longrightarrow G', \quad (x,y) \longmapsto (x,e,y), \] 
\[ f_2 : G \longrightarrow G', \quad (x,y) \longmapsto (x,x,y), \] 
are two morphisms which coincide on $H$. Thus, $f_1 = f_2$. But then
$G^0$ must be trivial. 
\end{proof}

\section{Proof of Theorem \ref{thm:main}}
\label{sec:main}

(i) $\Rightarrow$ (ii): By \cite[Thm.~2]{Brion}, $G$ has 
a smallest normal subgroup $N$ such that $G/N$ is proper; moreover,
$N$ is affine. If $H$ is epimorphic in $G$, then $H/H \cap N$
is epimorphic in $G/N$ by Lemma \ref{lem:quot} (ii). Using Lemma
\ref{lem:proper}, it follows that $H/H \cap N =G/N$. So 
$H \supset G_{\ant}$ by Lemma \ref{lem:ant}. Thus, $\bar{H} := H/G_{\ant}$ 
is epimorphic in $\bar{G} := G/G_{\ant}$ by Lemma \ref{lem:quot} (ii) 
again. In view of Theorem \ref{thm:BB}, this yields
$\cO(\bar{G}/\bar{H}) = k$. As 
\[ \cO(\bar{G}/\bar{H}) \cong \cO(\bar{G})^{\bar{H}} 
= \cO(G/G_{\ant})^H \cong \cO(G/H), \]
we obtain $\cO(G/H) = k$.

(ii) $\Rightarrow$ (i): Let again $\bar{G}:= G/G_{\ant}$ and 
$\bar{H}:=H/G_{\ant}$. Then $\cO(\bar{G}/\bar{H}) = k$ by the above
argument. Using Theorem \ref{thm:BB}, it follows that $\bar{H}$ is
affine epimorphic in $\bar{G}$. Together with Lemma \ref{lem:conn}
(ii) and Remark \ref{rem:aff} (ii),
this yields $\bar{G} = \bar{G}^0 \cdot \bar{H}$, and hence
$\cO(\bar{G}/\bar{H}) \cong \cO(\bar{G}^0/\bar{G}^0 \cap \bar{H})$.
By Theorem \ref{thm:BB} again, it follows that $\bar{G}^0 \cap \bar{H}$ 
is affine epimorphic in $\bar{G}^0$. Also, note that $G = G^0 \cdot H$,
since $G_{\ant}$ is connected and contained in $H$.

Let $f_1,f_2: G \to G'$ be morphisms that coincide on $H$. Then
$f_1,f_2$ pull back to morphisms $f_1^0,f_2^0: G^0 \to G'^0$
which coincide on $G_{\ant} \triangleleft G^0 \cap H$. Moreover, the 
common scheme-theoretic image of $G_{\ant}$ under $f_1^0, f_2^0$ 
is contained in $G'_{\ant} \triangleleft G'^0$. This yields morphisms 
of affine algebraic groups
\[ \bar{f}_1^0,\bar{f}_2^0: \bar{G}^0 \to G'^0/G'_{\ant} \]
which coincide on $\bar{G}^0 \cap \bar{H}$. Thus, 
$\bar{f}_1^0 = \bar{f}_2^0$, that is, the morphism of schemes
\[ \varphi : G^0 \longrightarrow G'^0, 
\quad x \longmapsto f_1(x) f_2(x)^{-1} \]
factors through $G'_{\ant}$. We have 
\[ \varphi(x \, y) = f_1(x) \, f_1(y) \, f_2(y)^{-1} \, f_2(x)^{-1} \]
identically on $G^0 \times G^0$. Since $G'_{\ant}$ is contained
in the center of $G'^0$, it follows that $\varphi$ is a morphism 
of algebraic groups.

As $f_1$ and $f_2$ coincide on $G_{\ant} \subset H$, the kernel of 
$\varphi$ contains $G_{\ant}$. Thus, $\varphi$ factors through a morphism
of algebraic groups $\psi: \bar{G}^0 \to G'_{\ant}$. Since $\bar{G}^0$ 
is affine, so is the scheme-theoretic image of $\psi$. Also, $\psi$
is trivial on $\bar{G}^0 \cap \bar{H}$, an affine epimorphic subgroup
of $\bar{G}^0$. Thus, $\psi$ is trivial, that is, $f_1$ and $f_2$ 
coincide on $G^0$. Since these morphisms also coincide on $H$,
and $G = G^0 \cdot H$, we conclude that $f_1 = f_2$.

\section{Proof of Theorem \ref{thm:smooth}}
\label{sec:smooth}

(i) $\Rightarrow$ (ii): Recall from Theorem \ref{thm:main} that
$G_{\ant} \subset H$ and $\cO(G/H) = k$. Since $G_{\ant}$ is smooth,
it is contained in $H_{\red}$. Thus, using Theorem \ref{thm:main} again,
it suffices to show that $\cO(G/H_{\red}) = k$.

The natural map $u: G/H_{\red} \to G/H$ lies in a commutative square
\[ 
\xymatrix{
G \times H/H_{\red} \ar[r]^-{p_1} \ar[d]_{m} & G \ar[d]^{q} \\
G/H_{\red} \ar[r]^{u} & G/H,\\}
\]
where $p_1$ denotes the projection, $q$ the quotient map, and $m$ 
the pull-back of the action map $G \times G/H_{\red} \to G/H_{\red}$.
In fact, this square is cartesian and consists of faithfully flat morphisms
(see e.g.~the proof of \cite[Prop. 2.8.4]{Brion}).
As the scheme $H/H_{\red}$ is finite and has a unique $k$-rational
point, the map $p_1$ is finite and purely inseparable; thus, so
is $u$ by faithfully flat descent. Also, $G/H_{\red}$ and $G/H$ are smooth, 
since so is $G$. Thus, the induced map on rings of rational functions
\[ u^{\#}: k(G/H) \longrightarrow k(G/H_{\red}) \]
is injective, and there exists a positive integer $n$ (a power of 
the characteristic exponent of $k$) such that 
\[ k(G/H_{\red})^n \subset u^{\#} k(G/H). \] 
Also, by normality of $G/H$, we have 
$u^{\#} \cO(G/H) = u^{\#} k(G/H) \cap \cO(G/H_{\red})$
and hence 
\[ \cO(G/H_{\red})^n \subset u^{\#} \cO(G/H). \]
Since $\cO(G/H) = k$ and $\cO(G/H_{\red})$ has no non-zero nilpotents,
this yields the desired assertion.
  
(ii) $\Rightarrow$ (iii): We may replace $H$ with $H_{\red}$, 
and hence assume that $H$ is smooth. By Lemma \ref{lem:conn} (ii), 
we have $G = G^0 \cdot H$; thus, the natural map $H/H^0 \to G/G^0$
is surjective. Also, $G_{\ant}$ is connected, and contained in $H$
by Theorem \ref{thm:main}; hence $G_{\ant} \subset H^0$. So, using 
Theorem \ref{thm:main} once more, we are reduced to checking that 
$\cO(G^0/H^0) = k$.

Note that 
\[ k = \cO(G/H) = \cO(G^0 \cdot H/H) \cong \cO(G^0/G^0 \cap H). \]
Next, consider the natural map 
\[ \psi : G^0/H^0 \longrightarrow G^0/G^0 \cap H. \]
The finite \'etale group 
$F := (G^0 \cap H)/H^0 \subset H/H^0$ acts on the right on $G^0/H^0$, 
and $\psi$ is the categorical quotient for that action.
Thus, $\cO(G^0/H^0)^F \cong \cO(G^0/G^0 \cap H)$, 
and hence the algebra $\cO(G^0/H^0)$ is integral over 
$\cO(G^0/G^0 \cap H) = k$. As above, this implies the desired assertion.

(iii) $\Rightarrow$ (i): This follows by reverting some of the
previous arguments. More specifically, we have
$G_{\ant} = (G^0)_{\ant} \subset H^0_{\red} \subset H$. Also, 
$G = G^0 \cdot H_{\red} = G^0 \cdot H$ and hence 
\[ \cO(G/H) \cong \cO(G^0/G^0 \cap H) \cong \cO(G^0)^{G^0 \cap H}
\subset \cO(G^0)^{H^0} = k. \]
Thus, $H$ is epimorphic in $G$ by Theorem \ref{thm:main} again.

\section{Proof of Theorem \ref{thm:aff}}
\label{sec:aff}

(i) Consider the action of $G$ on $\cO(G)$ via right multiplication and 
let $L \subset G$ be the centralizer of the subspace 
$\cO(G)^H \subset \cO(G)$. In view of \cite[II.1.3.6]{Demazure-Gabriel},
$L$ is represented by a subgroup of $G$ that we will also denote by $L$.
Since $L$ acts trivially on $\cO(G)^H$, we have 
$\cO(G)^H \subset \cO(G)^L$. On the other hand, $H \subset L$
and hence $\cO(G)^L \subset \cO(G)^H$. Thus, $\cO(G)^L = \cO(G)^H$. 

We show that there exists a finite subset $F \subset \cO(G)^H$ such 
that $L$ is the centralizer $C_G(F)$. Indeed,  we may find $F$
such that $C_G(F)$ is minimal among all such centralizers. Then
$C_G(F \cup \{ f \}) = C_G(F)$ for any $f \in \cO(G)^H$, and hence
$C_G(F)$ centralizes the whole subspace $\cO(G)^H$.

Choose $F = \{ f_1, \ldots, f_n \} \subset \cO(G)^H$ such that
$L = C_G(F)$. Then $L$ is the centralizer in $G$ of 
$f_1 + \cdots + f_n$, viewed as a $k$-rational point of  
$\cO(G) \oplus \cdots \oplus \cO(G) =: n \cO(G)$.
As $f_1, \ldots, f_n$ are contained in some finite-dimensional 
$G$-submodule $V \subset n \cO(G)$, it follows that $G/L$ is isomorphic 
to a subscheme of the affine space associated with $V$ 
(see \cite[III.3.5.2]{Demazure-Gabriel}). In view of \cite[II.5.1.2]{EGA},
it follows that $G/L$ is quasi-affine. In other terms, the affinization map 
$\varphi_{G/L }$ is an open immersion. Since 
$\cO(G/L) = \cO(G/H)$, this yields the desired commutative triangle
\[ 
\xymatrix{
G/H \ar[d]_{u} \ar[dr]^{\varphi_{G/H}} \\
G/L \ar[r]_{\varphi_{G/L}} & X, \\}
\]
where $u$ denotes the natural map, and 
$X = \Spec \, \cO(G/H) = \Spec \, \cO(G/L)$.

Let $K$ be a subgroup of $G$ such that $K \supset H$ and $G/K$ 
is quasi-affine. Then we have a commutative square of $G$-equivariant
morphisms
\[ 
\xymatrix{
G/H \ar[r]^-{\varphi_{G/H}} \ar[d]_{v} & \Spec \, \cO(G/H) = \Spec \, \cO(G/L) 
\ar[d]^{ \varphi_v} \\
G/K \ar[r]^-{\varphi_{G/K}} & \Spec \, \cO(G/K),\\}
\]
where $\varphi_{G/K}$ is an open immersion. Thus, $v$ factors through $u$,
and hence $L \subset K$. 

(ii) In view of \cite[I.1.2.6]{Demazure-Gabriel}, the formation of 
the affinization morphism commutes with arbitrary field extensions.
Thus, so does the formation of $L$.

(iii) Consider a subgroup $K$ of $G$ containing $H$ such that $K/H$
is anti-affine. Denote by $q: G \to G/H$ the quotient map and by
$x = q(e_G)$ the base point. Then the pull-back map 
$\cO(G)^H \to \cO(K)^H \cong \cO(K/H) = k$ is identified with the
homomorphism $\cO(G/H) \to k$ given by evaluation at $x$. 
Thus, $K/H \subset G/H$ is contained in the fiber of 
$\varphi_{G/H}$ at $x$. By (i), this fiber is $L/H \subset G/H$. It
follows that $K \subset L$.

We now show that $L/H$ is anti-affine. As in the proof of Theorem
\ref{thm:smooth}, we have a cartesian diagram of faithfully flat
morphisms
\[ 
\xymatrix{
G \times L/H \ar[r]^-{p_1} \ar[d]_{m} & G \ar[d]^{r} \\
G/H \ar[r]^{u} & G/L,\\}
\]
where $p_1$ denotes the projection, $r$ the quotient map,
and $m$ the pull-back of the action map $G \times G/H \to G/H$. 
Thus, we obtain a canonical isomorphism of sheaves on $G$:
\[ r^*(u_* \cO_{G/H}) \stackrel{\cong}{\longrightarrow} 
(p_1)_*(m^* \cO_{G/H}).\]
Clearly, we have  
$m^*\cO_{G/H} = \cO_{G \times L/H}$ and $r^* \cO_{G/L} = \cO_G$. 
Moreover, the natural map $\cO_{G/L} \to u_*\cO_{G/H}$ 
is an isomorphism, since $\cO(G/L) = \cO(G/H)$ and $G/L$
is covered by open affine subschemes of the form
$(G/L)_f$, where $f \in \cO(G/L)$ (see 
\cite[II.5.1.2]{EGA}). It follows that the natural map 
$\cO_G \to (p_1)_* \cO_{G \times L/H}$
is an isomorphism as well. In particular, this yields
$\cO(G) = \cO(G \times L/H)$, and hence 
$\cO(L/H) = k$ as desired.

(iv) It suffices to show that the natural map
$L^0/L^0 \cap H \to L/H$ is an isomorphism, as every 
homogeneous space under a connected algebraic group is
geometrically irreducible (see e.g.~\cite[VIA.2.6.6]{SGA3}).
The quotient $L/L^0 \cdot H$ is finite and \'etale (since so
is $L/L^0$), and anti-affine (since so is $L/H$). Thus, this
quotient consists of a unique $k$-rational point. Hence
$L = L^0 \cdot H$; this yields the desired assertion.

(v) Let $K$ be a subgroup of $G$ containing $H$. As $K$ is
affine, we have by Theorem \ref{thm:main} that $K/H$ is 
anti-affine if and only if $H$ is epimorphic in $K$.
In view of (ii), this yields the assertion.

(vi) By (ii), we may assume that $k$ is algebraically closed.
Then $H \subset L_{\red} \subset L$ and the natural map
$G/L_{\red} \to G/L$ is finite, as shown in the proof of Theorem 
\ref{thm:smooth}. Since $G/L$ is quasi-affine, so is $G/L_{\red}$ in
view of \cite[II.5.1.2, II.5.1.12]{EGA}. Thus, $L = L_{\red}$
by the minimality of $L$, i.e., $L$ is smooth.

\begin{example}\label{ex:ns}
Assume that $k$ has characteristic $p > 0$. Let $Y = G/H$ be 
a smooth anti-affine homogeneous space, where $G$ is affine and
$H \subsetneq G$. We will construct a non-smooth anti-affine homogeneous 
space $X$ under an algebraic group containing $G$, such that $X$
contains $Y$ as its largest smooth subscheme. For this, we use a process
of ``infinitesimal thickening'' of an arbitrary homogeneous space $G/H$.
 
Let $M$ be a finite-dimensional $G$-module. Viewing $M$ as a $p$-Lie 
algebra with zero bracket and $p$th power map, we obtain 
a commutative infinitesimal algebraic group $G_p(M)$ of height $1$ 
(see \cite[VIIA.8.1.2]{SGA3}). The action of $G$ on $M$ yields
an action on $G_p(M)$ by automorphisms of algebraic groups; we denote by
$G_p(M) \rtimes G$ the corresponding semi-direct product.

Next, let $N \subset M$ be an $H$-submodule. As above, we may form the
semi-direct product $G_p(N) \rtimes H$; this is a subgroup of 
$G_p(M) \rtimes G$. Consider the homogeneous space
\[ X :=  G_p(M) \rtimes G / G_p(N) \rtimes H. \]
The chain of inclusions 
$ G_p(N) \rtimes H \subset  G_p(N) \rtimes G  \subset G_p(M) \rtimes G$
yields a morphism
\[ f: X \longrightarrow G_p(M) \rtimes G / G_p(M) \rtimes H \cong G/H = Y. \]
Moreover, $f$ is $G$-equivariant and its fiber at the base point 
$y \in Y(k)$ is $H$-equivariantly isomorphic to $G_p(M)/G_p(N)$. 
The latter quotient is canonically isomorphic to $G_p(M/N)$, by
\cite[VIIA.8.1.3]{SGA3}. The neutral element of $G_p(M/N)$ is fixed 
by $H$, and hence yields a section $s : Y \to X$ of $f: X \to Y$. 
As $G_p(M/N)$ is infinitesimal, $f$ and $s$ induce mutually inverse 
homeomorphisms of the underlying topological spaces of $X$ and $Y$.

We have an isomorphism
\[ \cO(X) \cong (\cO(G) \otimes \cO(G_p(M/N)))^H, \]
where $H$ acts simultaneously on $\cO(G)$ by left multiplication,
and on $\cO(G_p(M/N))$ via its linear action on $M/N$. Also, recall
from \cite[VIIA.7.4]{SGA3} the canonical isomorphism
\[ \cO(G_p(M/N)) \cong \Sym(M/N)^*/I, \]
where $\Sym(M/N)^*$ denotes the symmetric algebra of the dual
module of $M/N$, and $I$ the ideal generated by the $p$th powers of
all elements of $(M/N)^*$.

Assume that $G$ is affine. By a theorem of Chevalley
(see e.g.~\cite[II.2.3.5]{Demazure-Gabriel}), we may choose a
finite-dimensional $G$-module $M$ and a hyperplane $N \subset M$
such that $H$ is the stabilizer of $N$ for the $G$-action on $M$. 
In particular, $N$ is an $H$-submodule of $M$; we denote by $L = M/N$ 
the quotient line. Then we have an isomorphism of $H$-modules
\[ \cO(G_p(M/N)) \cong \bigoplus_{i = 0}^{p-1} L^{-i}, \]
where $L^{-i}$ denotes the $i$th tensor power of $L^*$ (in particular, 
$L^0$ is the trivial $H$-module $k$). Denoting by $\cL$ the $G$-linearized
invertible sheaf on $Y = G/H$ associated with the $H$-module $L$
(as in \cite[I.5.8]{Jantzen}), we then have
\[ \cO(X) \cong \bigoplus_{i = 0}^{p-1} (\cO(G) \otimes L^{-i})^H 
\cong \bigoplus_{i = 0}^{p-1} \Gamma(Y, \cL^{-i}). \]

Assume in addition that $Y$ is smooth, anti-affine and non-trivial.
Then the section $s$ identifies $Y$ to the largest smooth subscheme
of $X$. It remains to check that $X$ is anti-affine; for this, we
show that $\Gamma(Y, \cL^{-i}) = 0$ for all $i \geq 1$. Consider 
$\Gamma(Y, \cL) = (\cO(G) \otimes L)^H$. The exact sequence of
$H$-modules $0 \to N \to M \to L \to 0$ yields a morphism of $G$-modules
$(\cO(G) \otimes M)^H \to \Gamma(Y,\cL)$. Moreover, we have
an isomorphism of $G$-modules
$(\cO(G) \otimes M)^H \cong \cO(G/H) \otimes M = M$
in view of \cite[I.3.6]{Jantzen}. This defines a morphism of $G$-modules
$\varphi : M \to \Gamma(Y,\cL)$, dual to the immersion of 
$Y$ into the projective space of hyperplanes in $M$. 
In particular, $\varphi(N)$ is non-zero and consists of sections 
$\sigma \in \Gamma(Y,\cL)$ that vanish at the base point $y$, 
i.e., $\sigma_y \in \fm_y \cL_y$. Choose such a section $\sigma \neq 0$ 
and let $\tau \in \Gamma(Y,\cL^{-i})$. Then 
$\sigma^i \tau \in \Gamma(Y,\cO_Y) = k$ and $\sigma^i \tau$
vanishes at $y$ as well. Thus, $\sigma^i \tau = 0$, hence
$\tau = 0$ as $Y$ is smooth and geometrically irreducible.
\end{example}

\medskip

\noindent
{\bf Acknowledgments.} Many thanks for the anonymous referee for his/her
careful reading and very helpful suggestions.

\bibliographystyle{amsalpha}

\end{document}